\documentclass[12pt,a4paper]{article}

\usepackage{amssymb, amsmath, amsthm}
\usepackage{url}
\usepackage[utf8]{inputenc}

\def\SetMargins#1#2#3#4{

\global\topmargin -1.5 true in
\global\advance\topmargin #1

\global \textheight \paperheight
\global \advance\textheight -#1
\global \advance\textheight -#2

\global \textwidth \paperwidth
\global \advance\textwidth -#3
\global \advance\textwidth -#4

\global \oddsidemargin -1 true in
\global \advance\oddsidemargin #3
\global \evensidemargin -1 true in
\global \advance\evensidemargin #4
}

\pagestyle{plain}
\SetMargins{25mm}{25mm}{25mm}{25mm}

\newtheorem{Thm}{Theorem}
\newtheorem*{Thm*}{Theorem}

\newtheorem*{Lem*}{Lemma}

\newtheorem*{Cor*}{Corollary}

\newtheorem*{Rem*}{Remark}


\def\I#1{\left<#1\right>}

\begin{document}


\begin{center}
  \textbf{\large Representing the GCD as linear combination in non-PID
    rings}%
  \footnote{Key words and phrases: unique factorization domain,
    greatest common divisor, pricipal ideal }${}^,$%
  \footnote{2000 Mathematics Subject Classification:
    41A44 (approximations and expansions, best constants)}\\
  \medskip Géza Kós%
  \footnote{Computer and Automation Research Institute, Budapest and
    Loránd Eötvös University, Budapest.  E-mail:
    \texttt{kosgeza@sztaki.hu}}%

  \vspace{5mm}

  (Submitted to Acta Mathematica Hungarica on June 29, 2012)

  \vspace{5mm}

\end{center}


\begin{abstract}
  In this note we prove the following fact: if finite many elements
  $p_1,p_2,\ldots,p_n$ of a unique factorization domain are given such that
  the greatest common divisor of each pair $(p_i,p_j)$ can be expressed as a
  linear combination of $p_i$ and $p_j$ then the greatest common divisor of
  all $p_i$s also can be expressed as a linear combination of
  $p_1,\ldots,p_n$. We prove am analogous statement in commutative rings.
\end{abstract}


In our previous work \cite{ref:periodic} we needed the following result.
\begin{Thm}[Corollary 1.4. in \cite{ref:periodic}]
  \label{thm0}
  Suppose that the positive integers\break $d_1,d_2,\dots,d_n$ divide
  the positive integer $D$, and let $p_i(x)=
  1+x^{d_i}+x^{2d_i}+\ldots+x^{D-d_i}=\frac{1-x^D}{1-x^{d_i}}$ for
  $1\le i\le n$.  Then there exist polynomials $w_i\in\mathbb{Z}[x]$
  such that the greatest common divisor of the polynomials $p_i$ can
  be expressed as
  $\gcd(p_1,p_2,\ldots,p_n)=w_1p_1+w_2p_2+\ldots+w_np_n$.
\end{Thm}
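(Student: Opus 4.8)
The plan is to obtain Theorem~\ref{thm0} as a special case of the general fact stated in the abstract, applied to the unique factorization domain $\mathbb{Z}[x]$. By that fact it is enough to verify the hypothesis pairwise: for all $i,j$ the greatest common divisor of $p_i$ and $p_j$ lies in the ideal $(p_i,p_j)$ of $\mathbb{Z}[x]$. So the whole problem reduces to the two-element case, and I will spend the rest of the sketch on that.

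Fix $i,j$, set $g=\gcd(d_i,d_j)$, and put $q_i=\dfrac{x^{d_i}-1}{x^{g}-1}=1+x^{g}+x^{2g}+\dots+x^{d_i-g}$ and likewise $q_j$. Writing $x^{m}-1=\prod_{e\mid m}\Phi_e$ for the factorisation into cyclotomic polynomials, one reads off $p_i=\prod_{e\mid D,\;e\nmid d_i}\Phi_e$, hence $\gcd(p_i,p_j)=\prod_{e\mid D,\;e\nmid d_i,\;e\nmid d_j}\Phi_e$; a short inspection of the index sets then gives the factorisations $p_i=q_j\cdot\gcd(p_i,p_j)$ and $p_j=q_i\cdot\gcd(p_i,p_j)$, while $\gcd(q_i,q_j)=1$ in $\mathbb{Q}[x]$ (a common cyclotomic factor $\Phi_e$ of $q_i$ and $q_j$ would need $e\mid d_i$, $e\mid d_j$ and $e\nmid g$, which is impossible). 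In view of these factorisations it suffices to prove $1\in(q_i,q_j)$ in $\mathbb{Z}[x]$: multiplying a representation $1=a q_i+b q_j$ through by $\gcd(p_i,p_j)$ turns it into $\gcd(p_i,p_j)=b p_i+a p_j$.

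To establish $(q_i,q_j)=\mathbb{Z}[x]$ I will use that every maximal ideal of $\mathbb{Z}[x]$ has the form $(p,h(x))$ with $p$ a rational prime and $h$ irreducible modulo $p$; hence the claim follows once $q_i$ and $q_j$ are shown to have no common root in any algebraically closed field. In characteristic $0$ this is the coprimality already noted. Over $\overline{\mathbb{F}_p}$, suppose $\zeta$ is a common root; from the identities $q_i(\zeta)(\zeta^{g}-1)=\zeta^{d_i}-1$ and $q_j(\zeta)(\zeta^{g}-1)=\zeta^{d_j}-1$ we get $\zeta^{d_i}=\zeta^{d_j}=1$, so the order of $\zeta$ divides $\gcd(d_i,d_j)=g$ and therefore $\zeta^{g}=1$; but then $q_i(\zeta)=d_i/g$ and $q_j(\zeta)=d_j/g$ in $\mathbb{F}_p$, forcing $p\mid d_i/g$ and $p\mid d_j/g$, contradicting $\gcd(d_i/g,d_j/g)=1$. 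This completes the pairwise case and, via the general theorem, Theorem~\ref{thm0}.

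I expect the real difficulty to lie entirely in the general UFD statement, which is the subject of this note; inside the present corollary the only genuinely $p_i$-specific input is the cyclotomic bookkeeping that identifies $p_i/\gcd(p_i,p_j)$ with the explicit polynomial $q_j$, plus the easy characteristic-$p$ computation above. (If one prefers to avoid the general theorem, the same ideas give a direct argument: with $L=\operatorname{lcm}(x^{d_1}-1,\dots,x^{d_n}-1)$ one has $\gcd(p_1,\dots,p_n)=(x^{D}-1)/L$ and $p_i/\gcd(p_1,\dots,p_n)=L/(x^{d_i}-1)$, and a root count over each $\overline{\mathbb{F}_p}$ shows these $n$ cofactors generate $\mathbb{Z}[x]$; but the message of the paper is that the general theorem spares us such case-specific work.)
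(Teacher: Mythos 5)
Your proof is correct, and its top-level structure --- verify the pairwise hypothesis in $\mathbb{Z}[x]$ and then invoke the general UFD statement (Theorem~\ref{thm1}) --- is exactly how the paper intends Theorem~\ref{thm0} to be recovered (the theorem itself is quoted from \cite{ref:periodic}; this note only supplies the reduction to pairs and sketches the $n=2$ ingredient). Where you genuinely diverge is in the pairwise step. The paper's remark handles $n=2$ constructively: Euclid's algorithm, run on the exponents, writes $1-x^{\gcd(d_i,d_j)}$ as a $\mathbb{Z}[x]$-linear combination of $1-x^{d_i}$ and $1-x^{d_j}$, and multiplying that identity by $\frac{1-x^D}{(1-x^{d_i})(1-x^{d_j})}$ turns it directly into the required representation of $\gcd(p_i,p_j)$. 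You instead factor into cyclotomic polynomials to identify the cofactors $q_i,q_j$ with $p_i=q_j\gcd(p_i,p_j)$, and prove $(q_i,q_j)=\mathbb{Z}[x]$ abstractly, via the classification of maximal ideals of $\mathbb{Z}[x]$ and the observation that a common root in $\overline{\mathbb{F}_p}$ would force $p$ to divide both $d_i/g$ and $d_j/g$, contradicting their coprimality. Both arguments are valid: the paper's route is more elementary and produces explicit coefficients, while yours isolates the precise arithmetic obstruction and avoids tracking the Euclidean algorithm through the rational-function bookkeeping. Your concluding parenthetical (the direct argument with $L=\operatorname{lcm}(x^{d_1}-1,\dots,x^{d_n}-1)$) is only a sketch --- the claim that the $n$ cofactors have no common root in any $\overline{\mathbb{F}_p}$ would need its own verification --- but nothing in your main proof depends on it.
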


If the ring $\mathbb{Z}[x]$ is replaced by $\mathbb{Q}[x]$, this
statement becomes trivial, because $\mathbb{Q}[x]$ is a principal
ideal domain (which is not the case for $\mathbb{Z}[x]$).  The
statement is obvious for $n=2$ also, because
$p_i=\frac{1-X^D}{(1-x^{a_1})(1-x^{a_2})}(1-x^{a_{i+1}})$ and Euclid's
algorithm expresses $\gcd(1-x^{a_1},1-x^{a_2})$ as a linear
combination of $1-x^{a_1}$ and $1-x^{a_2}$, with coefficients from
$\mathbb{Z}[x]$.

In this paper we present two theorems which can be replacements for
Theorem~\ref{thm0}. Theorem~\ref{thm1} is valid in unique factorziation
domains, and uses only the property that for every pair $i,j$ of indexes,
$\gcd(p_i,p_j)$ can be expressed as a linear combination of $p_i$ and $p_j$.

Theorem~\ref{thm2} works in commutative rings (without unique
factorzation), and expresses the greatest common divisor of the
$(n-1)$-factor products\break $p_1\cdots p_{i-1}p_{i+1}\cdots p_n$
($i=1,2,\ldots,n$).

The results are formulated in equivalent forms, for greatest common divisors
of elements and for principal ideals in a ring.


\begin{Thm}\label{thm1}
  Let $R$ be a unique factorization domain.
  
  \begin{itemize}
  \item[(a)] Let $p_1,p_2,\ldots,p_n$ be nonzero elements
    in~$R$. Suppose that the greatest common divisor of every pair of
    $p_1,p_2,\ldots,p_n$ can be represented as a linear combination of
    them, i.e. for every $1\le i<j\le n$, we have
    $\gcd(p_i,p_j)=u_{ij}p_i+u_{ji}p_j$ with some $u_{ij},u_{j,i}\in
    R$. Then there exist elements $w_1,w_2,\ldots,w_n$ in $R$ such
    that $\gcd(p_1,p_2,\ldots,p_n)=w_1p_1+w_2p_2+\ldots+w_np_n$.
    
  \item[(b)] Let $I_1,I_2,\dots,I_n$ be principal ideals in $R$ such
    that $I_i+I_j$ is a principal ideal for every pair of indices
    $1\le i,j\le n$. Then $I_1+I_2+\dots+I_n$ also is a principal
    ideal.
  \end{itemize}
\end{Thm}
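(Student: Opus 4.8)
My plan is to reduce part (b) to part (a) and to prove part (a) by induction on $n$, the engine of the induction being a three–variable lemma that lets one replace two of the $p_i$ by their greatest common divisor without spoiling the pairwise hypothesis.

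\emph{Reduction of (b) to (a).} Write $I_i=(p_i)$; discarding any zero ideals we may assume all $p_i\ne0$ (if every $I_i$ is zero the sum is principal). If $(h)=I_i+I_j$ is principal then $h\mid p_i$ and $h\mid p_j$, so $h\mid\gcd(p_i,p_j)$, while $\gcd(p_i,p_j)\in I_i+I_j$ holds trivially; hence $I_i+I_j=(\gcd(p_i,p_j))$, and in particular $\gcd(p_i,p_j)$ is an $R$–linear combination of $p_i$ and $p_j$. Part (a) then yields $\gcd(p_1,\dots,p_n)=\sum w_ip_i$, and since $(p_1,\dots,p_n)\subseteq(\gcd(p_1,\dots,p_n))$ always, $I_1+\dots+I_n=(p_1,\dots,p_n)=(\gcd(p_1,\dots,p_n))$ is principal.

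\emph{The key lemma.} I would first prove: if $a,b,c\in R\setminus\{0\}$ and $\gcd(a,b)$, $\gcd(a,c)$ are $R$–linear combinations of $a,b$ and of $a,c$ respectively, then $\gcd\bigl(a,\gcd(b,c)\bigr)$ is an $R$–linear combination of $a$ and $\gcd(b,c)$. Dividing $a,b,c$ by $g=\gcd(a,b,c)$ leaves all hypotheses intact and turns the conclusion into $1\in\bigl(a,\gcd(b,c)\bigr)$, so assume $\gcd(a,b,c)=1$. Put $d_b=\gcd(a,b)$, $d_c=\gcd(a,c)$, $d=\gcd(b,c)$. Comparing prime exponents in the UFD $R$ gives $\gcd(d_b,d_c)=\gcd(d_b,d)=\gcd(d_c,d)=\gcd(a,b,c)=1$; since $d_b$ and $d_c$ divide $a$ and are coprime, $d_bd_c\mid a$, say $a=d_bd_ca'$. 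By hypothesis $d_b=xa+yb$ and $d_c=za+wc$ for some $x,y,z,w\in R$; as $d$ divides $b$ and $c$ we get $d_b\equiv xa$ and $d_c\equiv za\pmod d$, hence $d_bd_c\equiv xza^2=xz(d_bd_c)^2a'^2\pmod d$, i.e. $d\mid d_bd_c\,(1-xz\,d_bd_c\,a'^2)$. Since $\gcd(d,d_bd_c)=1$, this forces $d\mid 1-xz\,d_bd_c\,a'^2$; writing $1-xz\,d_bd_c\,a'^2=td$ and using $d_bd_c\,a'^2=a'a$, we obtain $1=(xza')\,a+t\,d$. Multiplying back by $g$, and noting that $\gcd\bigl(a,\gcd(b,c)\bigr)=\gcd(a,b,c)$, gives the lemma in general.

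\emph{The induction.} For $n\le2$ there is nothing to prove. For $n\ge3$, apply the lemma with $(a,b,c)=(p_i,p_{n-1},p_n)$ for each $i\le n-2$: it shows that the $n-1$ elements $p_1,\dots,p_{n-2},d$, where $d=\gcd(p_{n-1},p_n)$, again satisfy the pairwise hypothesis (the conditions among $p_1,\dots,p_{n-2}$ being inherited). By the inductive hypothesis $\gcd(p_1,\dots,p_{n-2},d)$ is an $R$–linear combination of $p_1,\dots,p_{n-2},d$; since $\gcd(p_1,\dots,p_{n-2},d)=\gcd(p_1,\dots,p_n)$ and $d$ is itself an $R$–linear combination of $p_{n-1},p_n$ by hypothesis, substituting finishes part (a). The only genuine obstacle is the lemma, and inside it the step to spot is that $d_bd_c$ is invertible modulo $d$ (the congruence $xz\,d_bd_c\,a'^2\equiv1\pmod d$); once that identity is in hand, the rest is bookkeeping with prime exponents.
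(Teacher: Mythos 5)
Your proof is correct, and it shares the paper's induction skeleton --- collapse $p_{n-1},p_n$ into $d=\gcd(p_{n-1},p_n)$, verify that the shorter list still satisfies the pairwise hypothesis, and recurse --- but the engine driving the induction is genuinely different. The paper proves the full $n=3$ case head-on: writing $\gcd(p_{i+1},p_{i+2})=de_i$ with $d=\gcd(p_1,p_2,p_3)$, it shows the $e_i$ are pairwise coprime, factors $p_i=de_{i+1}e_{i+2}f_i$, extracts the identities $u_{i,i+1}e_{i+1}f_i+u_{i+1,i}e_if_{i+1}=1$ from all three pairwise hypotheses, and obtains the coefficients by substituting two of these identities into the third. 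Your lemma instead isolates exactly the statement the induction step needs, that $\gcd\bigl(a,\gcd(b,c)\bigr)$ lies in the ideal generated by $a$ and $\gcd(b,c)$, and proves it by a congruence computation modulo $d=\gcd(b,c)$, using $a=d_bd_ca'$ and $a^2=a\cdot d_bd_ca'$ to invert $xz\,d_bd_c\,a'^2$ mod $d$. This buys two things: your lemma uses only the two pairwise hypotheses involving $a$ (the relation between $b$ and $c$ enters only at the very end, when $d$ is rewritten as a combination of $p_{n-1}$ and $p_n$), so it is formally sharper than the paper's three-hypothesis $n=3$ case; and the coefficients drop out of a single exact division rather than a nested expansion. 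Both arguments rest on the same UFD facts: pairwise coprimality of the cofactors of $\gcd(a,b,c)$, hence $d_bd_c\mid a$, and the cancellation ``$d\mid uv$ and $\gcd(d,u)=1$ imply $d\mid v$.'' One small wording slip in your reduction of (b) to (a): the inclusion that is actually trivial is $I_i+I_j\subset\I{\gcd(p_i,p_j)}$, whereas the membership $\gcd(p_i,p_j)\in I_i+I_j$ is the nontrivial direction --- it is precisely what you derive from $h\mid\gcd(p_i,p_j)$, so the argument stands.
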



\begin{Thm}\label{thm2}
  Let $R$ be a commutative ring with unity and $n\ge 2$.

  \begin{itemize}
  \item[(a)] Let $p_1,p_2,\ldots,p_n\in R$. If every nonempty subset
    of $\{p_1,p_2,\ldots,p_n\}$ generates a principal ideal then the
    $n$ products $p_2p_3p_4\ldots p_n$, 
    \break
    $p_1p_3p_4\ldots p_n$, \ldots,
    $p_1p_2p_3\ldots p_{n-1}$ gereate a principal ideal.

  \item[(b)] If $I_1,I_2,\dots,I_n$ are principal ideals in $R$ such
    that for $\sum\limits_{h\in H}I_h$ is a principal ideal for every
    nonempty set $H\subset\{1,2,\dots,n\}$, then
    $$
    I_2I_3I_4\ldots I_n + I_1I_3I_4\ldots I_n +\dots+ I_1I_2\ldots I_{n-1}
    $$
    also is a principal ideal.
  \end{itemize}
\end{Thm}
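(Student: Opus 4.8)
The plan is to prove (a); parts (a) and (b) are the same statement, because for principal ideals $I_i=(p_i)$ we have $\sum_{h\in H}I_h=(\{p_h:h\in H\})$, a product of principal ideals is again principal, and $\sum_i\prod_{j\ne i}I_j=(\hat p_1,\dots,\hat p_n)$ where $\hat p_i:=\prod_{j\ne i}p_j$. Call a tuple $p_1,\dots,p_n$ \emph{good} if $\sum_{h\in H}(p_h)$ is principal for every nonempty $H\subseteq\{1,\dots,n\}$; then every sub-tuple is good. I will prove by induction on $n$ that for a good tuple the ideal $(\hat p_1,\dots,\hat p_n)$ is principal. For $n=2$ this reads $(\hat p_1,\hat p_2)=(p_2,p_1)$, which is principal by hypothesis.

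For the inductive step, notice that $\hat p_1,\dots,\hat p_{n-1}$ all carry the factor $p_n$: with $q_i:=\prod_{j\in\{1,\dots,n-1\}\setminus\{i\}}p_j$ we have $\hat p_i=p_nq_i$ for $i\le n-1$, so $(\hat p_1,\dots,\hat p_{n-1})=p_n\cdot(q_1,\dots,q_{n-1})$. Since $p_1,\dots,p_{n-1}$ is good, the induction hypothesis gives $(q_1,\dots,q_{n-1})=(e)$. As $q_1\in(e)$ and $p_1\cdots p_{n-1}=p_1q_1$, we may write $p_1\cdots p_{n-1}=eg$ for some $g\in R$, so
$$(\hat p_1,\dots,\hat p_n)=p_n(e)+(p_1\cdots p_{n-1})=(p_ne,eg)=(e)\cdot(p_n,g),$$
and it is enough to prove that $(p_n,g)$ is principal. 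Here $g$ behaves like a least common multiple of $p_1,\dots,p_{n-1}$ (over a unique factorization domain it is an associate of $\operatorname{lcm}(p_1,\dots,p_{n-1})$).

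To show $(p_n,g)$ is principal I would run a second induction, on $m$, of the statement: \emph{for every good tuple $r_1,\dots,r_m,s$ the iterated least common multiple $L=\operatorname{lcm}(r_1,\dots,r_m)$ is well defined} (each step $\operatorname{lcm}(L_{k-1},r_k)$ being taken only after checking that $(L_{k-1},r_k)$ is a principal ideal, which is part of the claim) \emph{and $(s,L)$ is principal}. Applied to $r_i=p_i$, $s=p_n$ this yields that $(p_n,\operatorname{lcm}(p_1,\dots,p_{n-1}))$ is principal, and since $g$ and $\operatorname{lcm}(p_1,\dots,p_{n-1})$ generate the same ideal modulo $(p_n)$, so is $(p_n,g)$. (In a unique factorization domain the generator of $(p_n,g)$ is the lcm of the pairwise gcd's $\gcd(p_n,p_1),\dots,\gcd(p_n,p_{n-1})$ — the familiar distributivity of $\gcd$ over $\operatorname{lcm}$, which the inner statement internalizes.) The inductive step of the inner statement writes $L=\operatorname{lcm}(L',r_m)$ with $L'=\operatorname{lcm}(r_1,\dots,r_{m-1})$, applies the case $m-1$ to the three good tuples $\{r_1,\dots,r_{m-1},s\}$, $\{r_1,\dots,r_{m-1},r_m\}$ and $\{r_1,\dots,r_{m-1},\delta\}$ (where $(\delta)=(s,r_m)$, so that tuple is good), and feeds the outputs into a three-element \emph{engine}: if $(a,b)$, $(a,c)$, $(b,c)$, $(a,b,c)$ are all principal, then $\operatorname{lcm}(b,c)$ is well defined and $(a,\operatorname{lcm}(b,c))=(\operatorname{lcm}(\gcd(a,b),\gcd(a,c)))$ is principal.

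The chief obstacle is the three-element engine and, throughout, carrying the whole argument out over an arbitrary commutative ring, where one cannot simply divide by a gcd. Each time a generator $d$ of some $(a,b)$ is used one must also record the relation $d(1-\xi a'-\eta b')=0$ coming from $d=\xi a+\eta b$, $a=da'$, $b=db'$ (an ``idempotent-like'' correction factor), and the decisive membership $\operatorname{lcm}(\gcd(a,b),\gcd(a,c))\in(a,\operatorname{lcm}(b,c))$ inside the engine must be produced as an explicit $R$-linear combination assembled from such factors. The remaining work is routine but lengthy: checking that all the auxiliary tuples (pairs of gcd's, the lcm's $L'$, the elements $\delta$) are good, the base cases $m=1,2$ of the inner induction, and the commutativity/associativity identities for iterated lcm's that let the two descriptions of $g$ be identified.
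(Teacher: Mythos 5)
Your plan takes a genuinely different route from the paper, and its skeleton (pull the common factor $p_n$ out of $\hat p_1,\dots,\hat p_{n-1}$, reduce to the principality of $\I{p_n,g}$, identify $g$ with an iterated least common multiple, and run a second induction on lcm's) is plausible. But as written it is not a proof: the two steps that carry essentially all of the mathematical content are announced rather than carried out. The first is the ``three-element engine.'' The inclusion it needs, $\I{a,b}\cap\I{a,c}\subset\I{a}+\bigl(\I{b}\cap\I{c}\bigr)$, is a distributivity statement that is \emph{false} for general ideals --- in $\RR[x,y]$ one has $\I{x+y,x}\cap\I{x+y,y}=\I{x,y}\not\subset\I{x+y,xy}$ --- so it genuinely depends on the principality of $\I{b,c}$ and on the relation $\delta(1-sb'-tc')=0$ you allude to, and the explicit linear combination is the proof, not a routine afterthought. (For the record it does work: with $\I{b,c}=\I{\delta}$, $b=\delta b'$, $c=\delta c'$, $\delta=sb+tc$, and $w=\alpha a+\beta b=\gamma a+\epsilon c$, one gets $\beta b=(s\epsilon+t\beta)\,\delta b'c'+sb'(\gamma-\alpha)\,a$, and $\delta b'c'$ generates $\I{b}\cap\I{c}$; but you have to exhibit this.) You also use throughout, without stating it, the fact that $\I{a}\cap\I{b}$ is principal whenever $\I{a}$, $\I{b}$, $\I{a,b}$ are --- the paper's Lemma --- which is needed even to make sense of your binary lcm's.

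The second, and more serious, gap is the passage from $\I{p_n,g}$ to $\I{p_n,\mathrm{lcm}(p_1,\dots,p_{n-1})}$. The element $g$ is defined only by $eg=p_1\cdots p_{n-1}$; in a ring with zero divisors one cannot cancel $e$, different admissible choices of $g$ generate different ideals, and the claim that $g$ and the lcm ``generate the same ideal modulo $\I{p_n}$'' is exactly the assertion that needs proof. What your argument actually requires is the $(n-1)$-ary product formula $\I{q_1,\dots,q_{n-1}}\cdot\bigl(\I{p_1}\cap\dots\cap\I{p_{n-1}}\bigr)=\I{p_1\cdots p_{n-1}}$, which would let you bypass $g$ entirely by writing $\I{\hat p_1,\dots,\hat p_n}=\I{e}\cdot\I{p_n,L}$; but that formula is itself an induction of comparable weight to the theorem and appears nowhere in your outline. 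By contrast, the paper's proof avoids both difficulties with a different factorization, $S(I_1,\dots,I_n)=(I_1+I_2)\cdot\bigl(S(I_1,I_3,\dots,I_n)\cap S(I_2,I_3,\dots,I_n)\bigr)$, in which only one binary intersection ever occurs and the single two-element Lemma suffices; if you want to salvage your route, you must supply the engine computation and the product formula in full.
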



\begin{proof}[Proof of Theorem~\ref{thm1}]
  We prove by induction on~$n$. The cases $n=1$ and $n=2$ are trivial.
  
  For the case $n=3$ we follow the notation in
  statement~(a). Let\break ${\gcd(p_1,p_2,p_3)=d}$,
  $\gcd(p_2,p_3)=de_1$, $\gcd(p_1,p_3)=de_2$ and
  $\gcd(p_1,w_2)=de_3$. The elements $e_1,e_2,e_3$ are pairwise
  co-prime, because
  \begin{eqnarray}
    \gcd(de_i,de_{i+1})
    &=&\gcd(\gcd(p_{i+1},p_{i+2}),\gcd(p_i,p_{i+2}))= \notag  \\
    &=& \gcd(p_1,p_2,p_3)=d. \quad (i=1,2,3)
    \label{geza:gcdgcd}
  \end{eqnarray}
  By the construction, $de_{i+1}$ and $de_{i+2}$
  divide~$p_i$. Then~(\ref{geza:gcdgcd}) implies that
  $de_{i+1}e_{i+2}$ also divides~$p_i$, so $p_i=de_{i+1}e_{i+2}f_i$
  with some $f_i\in R$. Hence,
  \begin{eqnarray*}
    de_{i+2} &=& \gcd(p_i,p_{i+1}) = u_{i,i+1}p_i+u_{i+1,i}p_{i+1} = \\
    &=& u_{i,i+1}de_{i+1}e_{i+2}f_i+u_{i+1,i}de_ie_{i+2}f_{i+1}
  \end{eqnarray*}
  and therefore
  $$u_{i,i+1}e_{i+1}f_i+u_{i+1,i}e_if_{i+1}=1. \quad (i=1,2,3) $$
  Applying this, we can find that
  \begin{align*}
    d &= d(u_{12}e_2f_1+u_{21}e_1f_2) \\
    &= d\Big(u_{12}e_2f_1(u_{13}e_3f_1+u_{31}e_1f_3) +
    u_{21}e_1f_2(u_{23}e_3f_2+u_{32}e_2f_3)\Big) \\
    &= (u_{12}u_{13}f_1) (de_2e_3f_1) + (u_{21}u_{23}f_2) (de_1e_3f_2)
    + (u_{12}u_{31}f_1 + u_{21}u_{32}f_2) (de_1e_2f_3) \\
    &= (u_{12}u_{13}f_1) p_1 + (u_{21}u_{23}f_2) p_2
    + (u_{12}u_{31}f_1 + u_{21}u_{32}f_2) p_3.
  \end{align*}
  So the elements $w_1=u_{12}u_{13}f_1$, $w_2=u_{21}u_{23}f_2$,
  $w_3=u_{12}u_{31}f_1 + u_{21}u_{32}f_2$ satisfy
  $d=w_1p_1+w_2p_2+w_3p_3$.

  \medskip

  For the induction step we use the more convenient notatiopn from part~(b).
  Let $n\ge4$ and assume that the theorem is valid for all smaller values. Let
  $J=I_{n-1}+I_n$ and apply the theorem for the ideals $I_1,\dots,I_{n-2}$ and
  $J$.  From the case $n=3$ we know that $I_i+J=I_i+I_{n-1}+I_n$ is a
  principal ideal and hence the induction hypothesis can be applied.
\end{proof}


\bigskip

For the proof of Theorem~\ref{thm2} we will use the following tool.

\begin{Lem*}
  \begin{itemize}
  \item[(a)] If $a,b\in R$ and $\I{a,b}=\I{d}$ for some $d\in R$, then
    there exists an element $m\in R$ such that $\I{a}\cap\I{b}=\I{m}$
    and $md=ab$.
  \item[(b)] If $A$, $B$ and $A+B$ are principal ideals in $R$ then
    $A\cap B$ also is a principal ideal and ${(A+B) \cdot (A\cap
      B) = AB}$.
  \end{itemize}
\end{Lem*}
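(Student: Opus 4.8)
The plan is to prove part (a) by writing down an explicit generator of $\I{a}\cap\I{b}$, and then to deduce part (b) from part (a) merely by passing to generators.

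For (a): since $\I{a,b}=\I{d}$, I would fix $\alpha,\beta,x,y\in R$ with $a=\alpha d$, $b=\beta d$ and $d=xa+yb$, and then propose $m:=\alpha\beta d=\alpha b=\beta a$ as the generator of the intersection. The identity $md=\alpha\beta d^{2}=(\alpha d)(\beta d)=ab$ is immediate, and $m=\beta a\in\I{a}$ together with $m=\alpha b\in\I{b}$ give the easy inclusion $\I{m}\subseteq\I{a}\cap\I{b}$ for free.

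The hard part will be the reverse inclusion, since $R$ may have zero divisors and we are not allowed to cancel $d$ from an equation of the form $dr=0$. The device I would use is the element $e:=x\alpha+y\beta$: substituting $a=\alpha d$ and $b=\beta d$ into $d=xa+yb$ yields $ed=d$, so $e$ acts as the identity on every multiple of $d$ (if $c=rd$ then $ec=r(ed)=rd=c$). Now, given $c\in\I{a}\cap\I{b}$, write $c=sa=tb$; then $c=s\alpha d$ is a multiple of $d$, hence $c=ec=x\alpha c+y\beta c$, while $\alpha c=\alpha tb=t(\alpha b)=tm$ and $\beta c=\beta sa=s(\beta a)=sm$. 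Therefore $c=(xt+ys)m\in\I{m}$, so $\I{a}\cap\I{b}=\I{m}$ and (a) is proved.

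For (b): I would pick generators $a,b$ with $A=\I{a}$ and $B=\I{b}$; then $A+B=\I{a,b}$ is principal, say $A+B=\I{d}$, and part (a) supplies $m\in R$ with $A\cap B=\I{m}$ and $md=ab$. Consequently $(A+B)(A\cap B)=\I{d}\,\I{m}=\I{dm}=\I{ab}=\I{a}\,\I{b}=AB$. So essentially all the content of the Lemma sits in the cancellation-free argument for the reverse inclusion in (a), powered by the fixed-point relation $ed=d$ in place of dividing by $d$.
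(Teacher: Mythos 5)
Your proposal is correct and is essentially the paper's own proof: you take the same generator $m=\alpha\beta d$ (the paper's $pqd$), verify the easy inclusion the same way, and your cancellation-free reverse inclusion ending in $c=(xt+ys)m$ is exactly the paper's computation $w=pqd(tu+vs)$, with the auxiliary element $e=x\alpha+y\beta$ merely repackaging the paper's direct substitution of $d=au+bv$. Part (b) is deduced from (a) identically in both arguments.
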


\begin{proof}
  Let $A=\I{a}$, $B=\I{b}$ and $A+B=\I{d}$. Then there are some
  elements $u,v,p,q\in R$ for which $d=au+bv$, $a=pd$ and $b=qd$. We
  show that $A\cap B=\I{pqd}$.

  Since $\I{pqd}\subset\I{pd}=\I{a}=A$ and
  $\I{pqd}\subset\I{qd}=\I{b}=B$, we have $\I{pqd}\subset A\cap B$.
    
  For the converse relation take an arbitrary $w\in A\cap B$; we have
  to prove $w\in\I{pqd}$. Chose two elements $s,t\in R$ with the
  property $w=as=bt$. Then
  \begin{eqnarray*}
    pds &=& as = w = bt = qdt, \quad\text{and}\\
    w &=& pds = p(au+bv)s =
    pu\cdot as + pvs\cdot b = \\
    &=& pu\cdot qdt + pvs\cdot qd =
    pqd\cdot(tu+vs) \in\I{pqd}.
  \end{eqnarray*}
  Therefore $A\cap B\subset\I{pqd}$ also holds, and $A\cap B=\I{pqd}$
  indeed.

  Finally,
  $$
  (A+B) \cdot (A\cap B) = \I{d} \cdot \I{pqd} = 
  \I{pd} \cdot \I{qd} = A \cdot B.
  $$
\end{proof}


\begin{Rem*}
  The Lemma is an extension of the basic property of principal ideal domains
  that the product of the greatest common divisor and the least common
  multiple of two elements is associated with the product of the two elements.

  The converse statement is false. For example, in the polynomial ring
  $\mathbb{R}[x,y]$ we have $\I{x}\cap\I{y}=\I{xy}$, but the ideal
  $\I{x,y}$ is not a principal ideal.
\end{Rem*}


\begin{proof}[Proof of Theorem~\ref{thm2}]
  For arbitrary ideals $J_1,J_2,\ldots,J_k$, define
  $$
  S(J_1,\ldots,J_k) = J_2J_3J_4\ldots J_k + 
  J_1J_3J_4\ldots J_{k-1} + \ldots + J_1J_2\ldots J_{k-1}.
  $$
  We show by induction on $n$ that $S(I_1,I_2,\ldots,I_n)$ is a
  principal ideal. For $n=2$ the statement is trivial.  Assume that
  $n\ge3$ and the theorem is true for smaller values. Let
  $$
  F=I_1+I_2 \quad\text{and}\quad
  M=S(I_1,I_3,I_4,\ldots,I_n)\cap S(I_2,I_3,I_4,\ldots,I_n).
  $$
  By the assumptions of the statement, $F$ is a principal ideal. We
  show that

  \begin{itemize}
  \item[(1)] $M$ is a principal ideal;
  \item[(2)] $ S(I_1,I_2,\ldots,I_n) = FM$.
  \end{itemize}
  
  By the induction hypothesis $S(I_1,I_3,I_4,\ldots,I_n)$ and
  $S(I_2,I_3,I_4,\ldots,I_n)$ are principal ideals; in view of the
  Lemma, to prove (1) it is sufficient to show that their sum also is
  principal.
  \begin{gather*}
    S(I_1,I_3,I_4,\ldots,I_n) + S(I_2,I_3,I_4,\ldots,I_n) = \\
    = \big( I_1\cdot S(I_3,\ldots,I_n)+I_3I_4\ldots I_n\big)
    + \big( I_2\cdot S(I_3,\ldots,I_n)+I_3I_4\ldots I_n\big) = \\
    = (I_1+I_2)\cdot S(I_3,\ldots,I_n)+I_3I_4\ldots I_n = \\
    = F\cdot S(I_3,\ldots,I_n)+I_3I_4\ldots I_n
    = S(F,I_3,I_4,\ldots,I_n).
  \end{gather*}
  Applying the induction hypothesis to the ideals
  $F=I_1+I_2,I_3,\ldots,I_n$ we get that $S(F,I_3,I_4,\ldots,I_n)$
  is principal. Hence (1) holds.
  
  \medskip

  The product $I_3I_4\ldots I_n$ is listed in the sums defining 
  $S(I_1,I_3,I_4,\ldots,I_n)$ and $S(I_2,I_3,I_4,\ldots,I_n)$. Hence,
  \begin{gather*}
    I_1I_3I_4\ldots I_n \subset
    I_1 \cdot \Big(S(I_1,I_3,I_4,\ldots,I_n)\cap
    S(I_2,I_3,I_4,\ldots,I_n)\Big)
    \subset \\
    \subset
    (I_1+I_2) \cdot
    \Big(S(I_1,I_3,I_4,\ldots,I_n)\cap S(I_2,I_3,I_4,\ldots,I_n)\Big)
    = FM.
  \end{gather*}
  and analogously $I_2I_3I_4\ldots I_n \subset FM$.
  
  By the Lemma we have $(I_1+I_2)\cdot(I_1\cap
  I_2)=I_1I_2$. Therefore, for arbitrary $3\le
  i_1<i_2<\ldots<i_{n-3}\le n$ we have
  \begin{gather*}
    I_1I_2I_{i_1}I_{i_2}\ldots I_{i_{n-3}}\subset
    I_1I_2\cdot S(I_3,I_4,\ldots,I_n) = \\
    = (I_1+I_2) \cdot
    \big(( I_1\cap I_2)\cdot S(I_3,I_4,\ldots,I_n)  \big) \subset \\
    \subset (I_1+I_2) \cdot \Big(\big( I_1\cdot S(I_3,I_4,\ldots,I_n)\big)\cap
    \big( I_2\cdot S(I_3,I_4,\ldots,I_n)\big)\Big) \subset \\
    \subset (I_1+I_2) \cdot \big(S(I_1,I_3,I_4,\ldots,I_n)\cap
    S(I_2,I_3,I_4,\ldots,I_n)\big) = FM.
  \end{gather*}
  Now we have proved that the ideals $I_2I_3I_4\ldots I_n$,
  $I_1I_3I_4\ldots I_n$, \dots, $I_1I_2\ldots I_{n-1}$ are all subsets
  of $FM$, so
  $$
  S(I_1,I_2,\ldots,I_n) = I_2I_3I_4\ldots I_n + I_1I_3I_4\ldots I_n
  +\dots+ I_1I_2\ldots I_{n-1} \subset FM.
  $$

  The converse relation also holds, since
  $$ 
  FM =
  \big(S(I_1,I_3,I_4,\ldots,I_n)\cap
  S(I_2,I_3,I_4,\ldots,I_n)\big)\cdot( I_1+ I_2)
  \subset
  $$
  $$
  \subset
  S(I_2,I_3,I_4,\ldots,I_n)\cdot I_1+
  S(I_1,I_3,I_4,\ldots,I_n)\cdot I_2
  \subset S(I_1,I_2,\ldots,I_n).
  $$
  Therefore, $S(I_1,I_2,\ldots,I_n)=FM$, the statement (2) holds. Since
  \break
  $S(I_1,I_2,\ldots,I_n)=FM$ is the product of two principal ideals, 
  $S(I_1,I_2,\ldots,I_n)$ also is a principal ideal.
\end{proof}



\end{document}